\newtheorem{Theo}{Theorem}
\newtheorem{Lem}[Theo]{Lemma}
\newtheorem{Cor}[Theo]{Corollary}
\newcommand{\N}{\mathbb{N}}
\begin{document}
\baselineskip=17pt
\title{Sumsets avoiding squarefree integers}
\author{Jan-Christoph Schlage-Puchta}
\address{Building S22\\
Krijgslaan 281\\
9000 Gent\\
Belgium}
\email{jcsp@cage.ugent.be}
\subjclass[2000]{Primary 11P70; Secondary 11N25, 11B75}
\keywords{Squarefree integers, sumsets, inverse problems}
\date{\today}
\begin{abstract}
We describe the structure of a set of integers $A$ of positive
density $\delta$, such that $A+A$ contains no squarefree integer. It
turns out that the behaviour changes abruptly at the values
$\delta_0=\frac{1}{4}-\frac{2}{\pi^2}=0.0473\ldots$ and
$\delta_1=\frac{1}{18}$.
\end{abstract}
\maketitle
\section{Introduction and results}
Let $A\subseteq[1, x]\cap \N$ be a set of $\delta x$ integers. Erd\H
os\cite{Erd} asked whether $\delta>1/4$ implies that there is some
subset of $A$ adding up to a squarefree integer. Erd\H os and
Freiman\cite{EF} showed that this is indeed the case, in fact,
$\mathcal{O}(\log x)$ summands are already sufficient. This was
further improved by Nathanson and S\'ark\"ozy\cite{NS}, who showed that
21 summands suffice, and Filaseta\cite{Fil} showed that if
$\delta>1-\frac{8}{\pi^2}+\epsilon$, $x>x_0(\epsilon)$ either $A+A$
contains a squarefree integer, or $A$ is a subset of $4\N$ or a
subset of $4\N+2$. Schoen\cite{Sch} showed that if $\delta\geq 0.1$
and $x$ is sufficiently large, then either $A+A$ contains a squarefree
integer, or $A$ is a subset of $4\N$, $9\N$, or $4\N+2$. From this
sequence of results one might expect that for every $\delta>0$ a set
$A$ with $|A|>\delta x$, such that $A+A$ contains no squarefree
integer does so because $A+A$ is not much larger then $A$ itself. However,
here we show that this is only the case for $\delta$ not too
small. More precisely, we show that the values
$\delta=\frac{1}{4}-\frac{2}{\pi^2}=0.0473\ldots$ and
$\delta_1=\frac{1}{18}$ are critical. We will show the following. 
\begin{Theo}
Set $\delta_0=\frac{1}{4}-\frac{2}{\pi^2}$, $\delta_1=\frac{1}{18}$. For every $\epsilon>0$
there exists some $x_0$, such that for all $x>x_0$ the following two
statements become true.
\begin{enumerate}
\item[(i)] Let $A\subseteq[1, x]\cap\N$ be a set of integers, such that
  $A+A$ contains no squarefree integer. If $|A|>(\delta_1+\epsilon)x$,
  then $A$ is a subset of $4\N$, $9\N$, or $4\N+2$.
\item[(ii)] Let $A\subseteq[1, x]\cap\N$ be a set of integers, such that
  $A+A$ contains no squarefree integers. If
  $|A|>(\delta_0+\epsilon)x$, then either $A$ is as in {\rm (i)}, or all
  elements of $A$ are even, and there exist some $a\in\{1, \ldots,
  8\}$, such that for $n\in A$ we have
\begin{eqnarray*}
  n\equiv 2\pmod{4} & \Leftrightarrow & n\equiv a\pmod{9}\\
  n\equiv 0\pmod{4} & \Leftrightarrow & n\equiv -a\pmod{9}.
\end{eqnarray*}
\item[(iii)] There exists a set $A\subseteq\N$ of density
  $>\delta_0-\epsilon$, such that $A+A$ contains no squarefree
  integers, $A+A+A$ contains all integers with finitely many
  exceptions, and the set of not squarefree integers not contained in
  $A+A$ has density at most $\epsilon$.
\end{enumerate}
\end{Theo}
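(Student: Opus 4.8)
The plan is to derive a single squarefree-detecting identity and to read all three parts off it. Writing $r_A(q)=\#\{(a,a')\in A\times A: q\mid a+a'\}$ and using $\mu^2(n)=\sum_{d^2\mid n}\mu(d)$, the hypothesis that $A+A$ contains no squarefree integer gives
\[
0=\sum_{a,a'\in A}\mu^2(a+a')=\sum_{d\ge 1}\mu(d)\,r_A(d^2),
\]
where the range is effectively $d\le\sqrt{2x}$ and the tail is controlled by the standard squarefree sieve. Isolating the $d=1$ term yields $|A|^2=\sum_{d\ge 2}(-\mu(d))\,r_A(d^2)$. The elementary input I would use repeatedly is that, writing $a_r=|A\cap(r+q\mathbb Z)|$, one has $r_A(q)=\sum_r a_r a_{-r}\le\lceil x/q\rceil\,|A|$, so with $\delta=|A|/x$ and $P(q)=r_A(q)/|A|^2$ we get $P(q)\le 1/(q\delta)+o(1)$. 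Reading the identity as $P(4)+P(9)-P(36)+R=1$, the residual $R=P(\text{non-squarefree but not divisible by }4\text{ or }9)$ measures the additive mass carried only by primes $p\ge5$.

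First I would prove the structural dichotomy that $R$ is forced below $\epsilon$ as soon as $\delta>\delta_0+\epsilon$, so that all but $o(|A|^2)$ pairs have $a+a'$ divisible by $4$ or by $9$. This is the quantitative heart: the naive estimate $R\le\delta^{-1}\sum_{p\ge5}p^{-2}$ is far too weak at the densities in question, so the real argument must show that concentrating $A$ simultaneously modulo several distinct prime squares is self-defeating, since by CRT each such concentration costs a factor in the density. Balancing these costs is what should pin the admissible residual mass, and with it the exact threshold $\delta_0$.

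Next I would convert the approximate information into the exact subset statements. Inspecting $A$ modulo $4$, if the bulk occupies a single class in $\{0,2\}$ we are in $4\N$ or $4\N+2$; if it concentrates on $0\pmod 9$ we are in $9\N$ (the only single class mod $9$ whose doubles vanish). The sole remaining option is that both residues $0$ and $2\pmod4$ occur, whence the cross sums must be divisible by $9$; this forces the $2\pmod4$ part into a single class $a\pmod9$ and the $0\pmod4$ part into $-a\pmod9$, which is exactly the correspondence of (ii). Since that configuration occupies two classes modulo $36$, its density cannot exceed $1/18=\delta_1$, and this observation yields (i): for $\delta>\delta_1+\epsilon$ the mixed configuration is excluded and only the pure cases survive. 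To upgrade ``almost all'' to ``all'' I would test each $n\in A$ against the structured bulk: were $n$ to violate the pattern, some $n+a'$ with $a'$ in the (large, structured) bulk would be squarefree, a contradiction; this cleaning step removes every exceptional element and delivers the exact classifications.

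Finally, for (iii) I would exhibit an explicit set of density $\delta_0-o(1)$ whose sumset exhausts almost all non-squarefree integers. The guiding computation is that $2/\pi^2$ is precisely the density of the squarefree integers $\equiv2\pmod4$, so $\delta_0=\tfrac14-\tfrac2{\pi^2}$ is exactly the density of the non-squarefree integers in $4\N+2$; the construction redistributes this mass across residue classes of both parities by a covering/CRT scheme that assigns to each element a prime $p$ with $p^2\mid a+a'$, keeping every pairwise sum non-squarefree while letting the sumset sweep out the non-squarefree integers. Properties (c) and (d) would then follow from representation-count lower bounds, proved either by a circle-method estimate or by a sieve, showing that each target integer (respectively, almost every non-squarefree integer) admits a representation. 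The hard part throughout will be the sharp dichotomy of the second step, since extracting the precise constant $\delta_0$ from the incompatibility of simultaneous modular concentration is exactly what must match the density of the construction in (iii).
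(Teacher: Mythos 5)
Your outline correctly identifies where the difficulty lies, but at exactly those points it stops being a proof. The identity $P(4)+P(9)-P(36)+R=1$ and the trivial bound $P(q)\le 1/(q\delta)+o(1)$ are fine, but the step that would make everything work --- showing that the residual mass $R$ is negligible once $\delta>\delta_0+\epsilon$, i.e.\ that ``concentrating $A$ simultaneously modulo several prime squares is self-defeating'' --- is only announced, never carried out; you yourself flag it as ``the hard part.'' That step is precisely where the paper's content sits. The paper works with the local densities $\delta_a$ of $A$ in the residue classes $a\bmod 36$, introduces the set $U$ of classes with $\delta_a>1-\frac{9}{\pi^2}+\epsilon/100$ (since $\frac{9}{\pi^2}$ is the density of squarefree numbers in a class mod $36$ outside $Q$), and proves as its key lemma: if $a+b\notin Q$ then $\delta_a+\frac{3}{8}\delta_b\le 1-\frac{9}{\pi^2}+\epsilon$. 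The mechanism is a \emph{two-shift} device absent from your plan: a large $\delta_b$ forces two elements $b_1,b_2\equiv b\pmod{36}$ whose difference is divisible by few prime squares, and the density of $n$ with \emph{both} $n+b_1$ and $n+b_2$ non-squarefree is then at most $1-\frac{9}{\pi^2}-\frac{9}{\pi^2(p_k^2-1)}$; comparing this loss against the gain $\prod_{i=3}^{k-1}p_i^{-2}$ in $\delta_b$ is exactly the ``balancing of costs'' you postulate but do not perform. Without this (or an equivalent quantitative device) the constant $\delta_0$ never emerges, and the ensuing case analysis on $|U|\in\{0,1,\geq 2\}$, which is what actually produces the exact structures in (i) and (ii), has no starting point.

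The one concrete mechanism you do supply, the ``cleaning step,'' is unsound as stated: if $n$ violates the pattern, it does \emph{not} follow that some $n+a'$ with $a'$ in the structured bulk is squarefree. The bulk, restricted to any single class mod $36$, may well have local density below $1-\frac{9}{\pi^2}\approx 0.088$ (a total density near $\delta_0\approx 0.047$ spread over many classes certainly permits this), and then every $n+a'$ can avoid squarefree numbers if $A$ is additionally concentrated modulo squares of larger primes. A contradiction from a single shift requires a class of local density exceeding $1-\frac{9}{\pi^2}$ --- this is exactly why the paper's Lemma 1 applies only to $u\in U$, and why the two-shift lemma is needed everywhere else; note also that the paper never passes through an ``almost-all'' stage, arguing instead directly about the set $V$ of all occupied classes, so no cleaning is ever required. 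For (iii), your observation that $\delta_0$ equals the density of non-squarefree integers in $4\N+2$ is correct and pleasant, but no construction is actually produced, and the appeal to ``circle method or sieve'' is misdirected: the paper's set is explicit, namely $A=\{n: 4\mid n,\ \mu^2((n,q))=0\}\cup q\N$ with $q=\prod_{i=2}^k p_i^2$, and all its properties are verified by elementary means --- cross sums via $(x+y,q)=(x,q)$, cofiniteness of $A+A+A$ via representations $36x+100y+qz$, and coverage of almost all non-squarefree integers via $4p^2x+qy$. As it stands, your text is a research plan that names the right obstacles rather than a proof that overcomes them.
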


A more qualitative description for infinite sets is given in the
following. For a set 
$A$, let $\Sigma(A)$ be the set of all subset sums of $A$.

\begin{Cor}
Let $A\subseteq\N$ be a set of upper density $\delta$, and assume that
$A+A$ contains no squarefree integers. If $\delta>\frac{1}{18}$, then
$\Sigma(A+A)$ has density at most $\frac{1}{4}$. If
$\delta>\frac{1}{4}-\frac{2}{\pi^2}$, then $\Sigma(A)$ has density at
most $\frac{1}{2}$. On the other hand, there exists a set $A$ of
density $\frac{1}{18}$, such that $A+A$ contains no squarefree
integer, but $A+A+A$ does, and every sufficiently large even integer can be
written as the sum of 6 elements of $A$. Moreover, for every $\epsilon>0$ there
exists a set $A$ of density $\frac{1}{4}-\frac{2}{\pi^2}-\epsilon$,
such that $A+A$ contains no squarefree integer, $A+A$ has density
$1-\frac{6}{\pi^2}-\epsilon$, and $A+A+A=\N$ with finitely many
exceptions. 
\end{Cor}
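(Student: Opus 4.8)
The plan is to split the Corollary into its two upper bounds, which are deductions from the Theorem, and its two explicit constructions, which witness sharpness. For the upper bounds the only real work is to transfer the finite statement of the Theorem to the infinite, upper-density setting. I would fix $A$ of upper density $\delta$ with $A+A$ squarefree-free, set $\epsilon=(\delta-\delta_1)/2$ (respectively $(\delta-\delta_0)/2$), and choose an increasing sequence $x_1<x_2<\cdots$ along which $|A\cap[1,x_j]|>(\delta_1+\epsilon)x_j$ (respectively $>(\delta_0+\epsilon)x_j$); such a sequence exists precisely because $\delta$ is the upper density. Applying part (i) (respectively part (ii)) of the Theorem to each truncation $A\cap[1,x_j]$ places it in one of finitely many residue-class types: $4\N$, $9\N$, $4\N+2$, together in the second case with the eight further configurations $T_a$ ($a\in\{1,\dots,8\}$) cut out by the displayed congruences.

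The key observation is that each type is defined by membership in a fixed union of residue classes, so being of a given type is inherited by subsets; hence for $x_i<x_j$ the type forced at $x_j$ is also a type of $A\cap[1,x_i]$. As there are finitely many types but infinitely many $x_j$, some type $S$ recurs infinitely often, and since every element of $A$ lies in all but finitely many truncations, one concludes $A\subseteq S$. Now I would simply read off the sumset in each admissible $S$. If $A\subseteq4\N$ or $A\subseteq4\N+2$ then $A+A\subseteq4\N$, and if $A\subseteq9\N$ then $A+A\subseteq9\N$, so $\Sigma(A+A)$ lies in $4\N$ or $9\N$ and has density at most $\tfrac14$. For the second implication, the types $4\N+2$ and $T_a$ consist of even numbers, so $\Sigma(A)\subseteq2\N$ and has density at most $\tfrac12$, while $4\N,9\N$ give the smaller bounds $\tfrac14,\tfrac19$; either way the density is at most $\tfrac12$.

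For sharpness at $\delta_1$ I would take $A$ to be the extremal configuration $T_a$ of part (ii) with $\gcd(a,3)=1$, say (for $a=1$) the two residue classes $8,10\pmod{36}$, of density exactly $\tfrac1{18}$. Then $A+A$ meets only the classes $16,18,20\pmod{36}$, each divisible by $4$ or by $9$, so $A+A$ contains no squarefree integer; whereas $A+A+A$ meets the class $26\pmod{36}$, which is $\equiv2\pmod4$ and coprime to $3$, and any such progression contains a positive proportion of squarefree integers, so $A+A+A$ does catch squarefrees. Finally, because the combinations $16p+18q\pmod{36}$ sweep out every even residue, the sumset formed from elements of $A$ reaches every even class modulo $36$, and the freedom in the sizes of the summands then realizes every sufficiently large even integer as a bounded sum of elements of $A$; pinning down the exact number of summands required is the delicate residue bookkeeping here.

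For sharpness at $\delta_0$ I would invoke part (iii) directly: for each $\epsilon>0$ it supplies a set $A$ of density exceeding $\delta_0-\epsilon$ with $A+A$ squarefree-free and $A+A+A=\N$ up to finitely many exceptions. Since $A+A$ contains no squarefree integer yet omits at most a density-$\epsilon$ set of non-squarefree integers, and the non-squarefree integers have density $1-\tfrac6{\pi^2}$, the density of $A+A$ equals $1-\tfrac6{\pi^2}-O(\epsilon)$, which is the claim after relabelling $\epsilon$. The hard part throughout is the construction at $\delta_1$: forcing $A+A$ to avoid every squarefree integer, while $A+A+A$ nonetheless meets one, while short sumsets still cover the even integers, are competing demands, and reconciling them is exactly what singles out the two-class configuration modulo $36$ and requires an input on squarefrees in arithmetic progressions.
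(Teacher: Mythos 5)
Your handling of the two density bounds and of the $\delta_0$-construction matches the paper's intent: the paper simply declares these ``obvious from the theorem,'' and your truncation-plus-pigeonhole argument (finitely many residue-class types, each closed under passing to subsets, so one type recurs and absorbs all of $A$) is the correct way to make the passage from the finite statement to upper density rigorous. Your choice of extremal set at density $\tfrac1{18}$, namely the two classes $8,10\pmod{36}$, is exactly the paper's choice ($a=1$ in part (ii)), and your verification that $A+A$ lands in $\{16,18,20\}\pmod{36}\subseteq Q$ while $A+A+A$ reaches a class outside $Q$ is sound (the paper uses $30\pmod{36}$, you use $26$; both work).

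There is, however, a genuine gap at the one point where the paper actually does a computation: the claim that every sufficiently large even integer is the sum of $6$ elements of $A$. You defer this as ``delicate residue bookkeeping,'' but the bookkeeping is the content of the claim, and the mechanism you sketch does not deliver it. A sum of exactly $k$ elements of $A$ is $\equiv 8k+2j\pmod{36}$ for some $0\le j\le k$, so combinations $16p+18q$ (equivalently, sums of elements of $A+A$, hence an \emph{even} number of elements of $A$) with $p+q\le 3$ reach only the residues $\{0,2,4,12,14,16,18,20,22,24,32,34\}$ modulo $36$; the classes $6,8,10,26,28,30$ are unreachable this way --- for instance $6\pmod{36}$ forces exactly five summands. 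So ``$\langle 16,18\rangle$ generates all even residues'' is true as a subgroup statement but false under the budget of six summands with even parity. What must actually be checked is that the union over $1\le k\le 6$ of the sets $\{8k+2j: 0\le j\le k\}$ covers all $18$ even residues modulo $36$ (it does, and this is the finite verification the paper encodes as writing every residue modulo $9$ as $2x-y$ with $0<2x+y\le 5$); note also that the statement can only be read as ``at most $6$ summands,'' since exactly six summands occupy only seven residues modulo $36$. Your proposal as written neither performs this check nor notices that the parity of the number of summands matters, so the $6$-element claim is not established.
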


We remark that Schoen\cite{Sch} constructed a set $A$ of density
$\frac{1}{36}+0.00025\ldots$, such that $A+A$ contains no squarefree
integer, but for any set $B\supseteq A$,
which is periodic, $B+B$ contains squarefree integers.

\section{Proof of part (i) and (ii)}

Let $A\subset[1, x]\cap\N$ be a set, such that
$A>(\delta_0+\epsilon)x$, and that $A+A$ contains no squarefree
integer. Assume further that $A$ is not a subset of $4\N$, $9\N$, or
$4\N+2$. The proof relies on a detailed study of the distribution of
$A$ modulo 36. For each
$a\in\{0, \ldots, 35\}$ set $\delta_a=\frac{36\#\{n\in A, n\equiv
  a\pmod{36}\}}{x}$, that is, $\delta_a$ is the local density of $A$ in
the residue class $a\bmod 36$. Let $U$ be the set of all residue classes $a\bmod
36$ with $\delta_a>1-\frac{9}{\pi^2}+\epsilon/100$, and let $V$ be
the set of all residue classes $a$ with $\delta_a>0$. Finally, let
$Q=\{0, 4, 8, 9, 12, 16, 18, 20, 24, 27, 28, 32\}$ be the set of all
residue classes which do not contain squarefree integers.

\begin{Lem}
We have $U+V\subseteq Q$.
\end{Lem}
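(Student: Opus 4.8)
The plan is to argue by contradiction: suppose $a\in U$ and $b\in V$ but $a+b\notin Q\pmod{36}$, and manufacture a squarefree element of $A+A$. First I would record the arithmetic meaning of $Q$: a residue class $c\bmod 36$ meets no squarefree integer precisely when $4\mid c$ or $9\mid c$, and this is exactly the list defining $Q$. Hence the assumption $c:=a+b\bmod 36\notin Q$ means $4\nmid c$ and $9\nmid c$, so the class $c$ does contain squarefree integers, and in fact with positive relative density.

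The crucial quantity is that relative density. For $c$ with $4\nmid c$ and $9\nmid c$, an integer $s\equiv c\pmod{36}$ automatically satisfies $4\nmid s$ and $9\nmid s$, so squarefreeness of $s$ is equivalent to $p^2\nmid s$ for all primes $p\ge 5$. Since each such $p$ is coprime to $36$, the residue $s\bmod p^2$ is equidistributed along the progression and these conditions are independent by the Chinese Remainder Theorem, giving relative density $\prod_{p\ge5}(1-p^{-2})=\frac{6/\pi^2}{(1-\frac{1}{4})(1-\frac{1}{9})}=\frac{9}{\pi^2}$. This is precisely the constant appearing in the definition of $U$, which is the whole point of the argument.

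Now fix any $m\in A$ with $m\equiv b\pmod{36}$; such an $m$ exists because $\delta_b>0$. As $n$ runs over the $\sim x/36$ integers in $[1,x]$ with $n\equiv a\pmod{36}$, the sums $n+m$ run over a length-$x$ interval inside the class $c\pmod{36}$, so by the standard count of squarefree numbers in an arithmetic progression the number of such $n$ with $n+m$ squarefree is $\frac{9}{\pi^2}\cdot\frac{x}{36}+o(x)$. On the other hand, the number of $n\equiv a\pmod{36}$ in $[1,x]$ lying outside $A$ is $(1-\delta_a)\frac{x}{36}<(\frac{9}{\pi^2}-\frac{\epsilon}{100})\frac{x}{36}$, using $a\in U$. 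For $x$ large the first count strictly exceeds the second, so some $n\equiv a\pmod{36}$ lies in $A$ and has $n+m$ squarefree; then $n+m\in A+A$ is squarefree, contradicting the hypothesis on $A$. This forces $a+b\in Q$, i.e.\ $U+V\subseteq Q$.

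The main obstacle, and really the only nontrivial analytic input, is the uniformity of the squarefree count: I need the number of squarefree integers in $[1,x]\cap(c+36\mathbb{Z})$ to equal $\frac{9}{\pi^2}\cdot\frac{x}{36}+o(x)$ with an error independent of $c$ and of the shift $m$. This follows from the elementary inclusion–exclusion sieve for squarefrees, writing the indicator of squarefreeness as $\sum_{d^2\mid s}\mu(d)$ and truncating the sum over $d$, which yields an error term of size $O(\sqrt{x})$ uniformly in the progression. Everything else is bookkeeping with the local densities $\delta_a$.
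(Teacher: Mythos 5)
Your proposal is correct and follows essentially the same route as the paper: fix one element of $A$ in the class $b$, observe that the shifted progression $a+b\bmod 36$ lies outside $Q$ and hence contains squarefree integers with relative density $\frac{9}{\pi^2}$, and compare this with the bound $\delta_a>1-\frac{9}{\pi^2}+\epsilon/100$ to produce a squarefree element of $A+A$. Your write-up merely makes explicit the density computation and the uniformity of the squarefree count that the paper leaves implicit.
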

\begin{proof}
Suppose that $u\in U, v\in V$, and $a+b\not\in Q$. Fix an integer
$b\in A$ with $b\equiv v\pmod{36}$. Consider the set of all integers
$\{a+b:a\in A, a\equiv u\pmod{36}\}$. None of these integers is
squarefree. However, the density of squarefree integers in a residue
class modulo 36, which is not in $Q$, is
$\frac{4}{3}\cdot\frac{9}{8}\cdot\zeta(2)^{-1}= \frac{9}{\pi^2}$. Since
$\delta_a>1-\frac{9}{\pi^2}+\epsilon/100$, and $x$ is sufficiently
large, this yields a contradiction.
\end{proof}
\begin{Lem}
If $|U|\geq 2$, we are in the situation described in Theorem~{\rm1 (ii)}.
\end{Lem}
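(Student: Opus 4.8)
The plan is to combine the previous Lemma, which gives $U+V\subseteq Q$, with the product structure of $\mathbb{Z}/36\mathbb{Z}$. Identify a residue class $a\bmod 36$ with the pair $(a_4,a_9)$, where $a_4=a\bmod 4$ and $a_9=a\bmod 9$. Then $Q=\{(a_4,a_9):a_4=0\text{ or }a_9=0\}$ is the union of the column $a_4=0$ and the row $a_9=0$ in the $4\times 9$ grid, i.e.\ a ``cross''. For each $u\in U$ the inclusion $u+V\subseteq Q$ reads $V\subseteq Q-u$, and $Q-u$ is again a cross, namely the union of the column $a_4=-u_4$ and the row $a_9=-u_9$.

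Since $|U|\ge 2$, first I would fix two distinct classes $u,u'\in U\subseteq V$ and exploit $V\subseteq(Q-u)\cap(Q-u')$, the intersection of two such crosses. I would then distinguish three cases according to whether $u,u'$ agree in their mod-$4$ or mod-$9$ coordinate. If $u_4=u'_4$ (which forces $u_9\ne u'_9$), the two crosses share their column and meet in disjoint rows, so the intersection is exactly that column $a_4=-u_4$; then $A$ lies in a single class mod $4$, and since $u\in V$ we get $u_4=-u_4$, forcing $u_4\in\{0,2\}$, i.e.\ $A\subseteq 4\N$ or $A\subseteq 4\N+2$, contradicting our standing assumption. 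The case $u_9=u'_9$ is symmetric and yields $A\subseteq 9\N$, again a contradiction. Hence $u$ and $u'$ must differ in both coordinates, and a direct computation shows $(Q-u)\cap(Q-u')=\{p,q\}$ with $p=(-u_4,-u'_9)$ and $q=(-u'_4,-u_9)$, two distinct classes.

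It then follows that $V\subseteq\{p,q\}$; as $u,u'\in V$ are distinct, $V=\{u,u'\}=\{p,q\}$, so $A$ is supported on exactly two residue classes mod $36$. Solving $\{u,u'\}=\{p,q\}$ I expect to find that $u_4,u'_4$ are the two values $0$ and $2$ and that $u_9=-u'_9\ne 0$; writing $a$ for the mod-$9$ coordinate of the class that is $\equiv 2\pmod 4$ then places $A$ into exactly the form of Theorem~1~(ii), with all elements even. The case analysis on the two crosses is routine; the step requiring the most care will be this final identification, where one must check that the self-intersection conditions force $u_4,u'_4\in\{0,2\}$ and $u_9=-u'_9$, and that $a\ne 0$ (equivalently $a\ne -a\bmod 9$), so that the two displayed equivalences in (ii) are genuine equivalences rather than mere implications.
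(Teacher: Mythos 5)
Your proof is correct, and it takes a genuinely different route from the paper. The paper argues by an extended case analysis on $|U|\ge 3$, $|U|=2$ with $|V|\ge 3$, and $U=V=\{u_1,u_2\}$, repeatedly tracking parities and divisibility by $9$ of individual sums $u+v$ and deriving contradictions case by case. You instead exploit the Chinese-remainder product structure: $Q$ is exactly the ``cross'' $\{a: 4\mid a \text{ or } 9\mid a\}$ in the $4\times 9$ grid, so each condition $u+V\subseteq Q$ from Lemma~1 confines $V$ to a translated cross, and intersecting just two such crosses already pins $V$ down to a single column, a single row, or a two-element set $\{p,q\}$. All three of your cases check out: the column case forces $u_4=-u_4$, hence $A\subseteq 4\N$ or $4\N+2$; the row case forces $u_9=0$, hence $A\subseteq 9\N$; and in the generic case the identification $\{u,u'\}=\{(-u_4,-u'_9),(-u'_4,-u_9)\}$ has only one consistent matching (the other forces $u_9=u'_9=0$, contradicting $u_9\ne u'_9$), which yields $\{u_4,u'_4\}=\{0,2\}$ and $u_9=-u'_9=a\ne 0$, i.e.\ exactly the configuration of Theorem~1~(ii), with the equivalences genuine because $a\ne -a\bmod 9$. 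Your approach buys a shorter and more conceptual proof that also disposes of $|U|\ge 3$ for free (since $U\subseteq V$ and the generic case gives $|V|=2$); what it does not reproduce is the paper's explicit intermediate information (e.g.\ $u_1+u_2\equiv 18\pmod{36}$, which your computation of course also yields as $(2,0)$), but nothing later in the paper depends on the longer derivation. The only cosmetic gap is that the final matching is stated as ``I expect to find''; writing out the two possible bijections between $\{u,u'\}$ and $\{p,q\}$ as above closes it completely.
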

\begin{proof}
Suppose first that $U$ contains 3 different elements $a, b, c$. Assume
first that $a$ is odd. Then $2a\equiv 18\pmod{36}$, thus $a$ is 9 or
27. In particular, one of $b$ and $c$ is even, say, $b$ is even. Then
$a+b$ is odd, hence, $b$ is 0 or 18. No matter whether $c$ is odd or
even, $c$ is always divisible by 9. If every element in $V$ is
divisible by 9, then $A\subseteq 9\N$, and we are done. Otherwise
there exists an element $v\in V$ which is not divisible by 9. For this
element we have $a+v, b+v, c+v\in Q$. Since $a$ is odd, this implies that
$v$ is odd, however, then $b+v$ is also odd, hence divisible by 9,
which is impossible since $b$ is divisible by 9, but $v$ is not. Hence, if
$|U|\geq 3$, then all elements in $U$ are even. Moreover, if there is
some odd element $v\in V$, then $U+v$ contains at least 3 odd
elements, which is impossible, since $Q$ contains only 2 odd
elements. Hence $A\subseteq 2\N$. If there are elements $u\in U, v\in
V$ with $u\not\equiv v\pmod{4}$, then we have both $u+v\in Q$ as well as
$u+v\equiv 2\pmod{4}$, which implies $u+v=18$. Hence, for each $v\in V$
there is at most one $u\in U$ with $v\not\equiv u\pmod{4}$. However,
since $|V|\geq|U|\geq 3$, this is 
impossible, and we conclude that all elements in $V$ are congruent
modulo 4. However, this case was excluded in the beginning. Hence, we
find that $|U|\geq 3$ is impossible.

Now consider the case $|U|=2$, and assume first that $|V|\geq 3$. If
one element of $U$ is odd, it has to be 9 or 
27, and $V$ does not contain any even element except possibly 0 and 18. If
$U$ contains another odd element, then $U=\{9, 27\}$. Then $V\setminus
U$ contains at least one odd element $a$, for otherwise we would have
$V\subseteq\{0, 9, 18, 27\}$, which is impossible since
$A\not\subseteq 9\N$. For 
this $a$ we have that both $9+a, 27+a$ are
divisible by 4, which is impossible. Hence, $U$ contains one odd and
one even element, more precisely, we have $U=\{u_1, u_2\}$, where
$u_1\in\{9, 27\}$ and $u_2\in\{0, 18\}$. Let $v\in V\setminus U$ be an
element, which is not divisible by 9. If no such element exists, then
$A\subseteq 9\N$, and we are done. If such an element exists, both
$v+u_1$ and $v+u_2$ are not divisible by 9, and at least one of them
is odd, that is, one of $v+u_1, v+u_2$ is not in $Q$. Hence, all
elements in $U$ are even. 

If an element of $V$ was odd, the
elements in $U$ have difference 18, thus their sum is not divisible by
4, and we conclude that $U=\{0, 18\}$. Hence, all elements in
$V\setminus U$ are odd, which implies that $V\setminus U\subseteq\{9, 27\}$,
contradicting the assumption that $A$ is not contained in
$9\N$. Hence, all elements in $A$ are even. But if the elements are
all even, and not all congruent modulo 4, then there are at least two
sums $u+v$, which are 2 modulo 4, which is impossible. Hence, $|U|=2$
and $|V|\geq 3$ is also impossible.

Hence, it remains to consider the case $U=V=\{u_1, u_2\}$. If $u_1$ is
odd, then $2u_1=18$, that is, $u_1$ is 9 or 27. If $u_2$ was also odd,
then we would have $U=\{9, 27\}$, that is, $A\subseteq 9\N$. If $u_2$
was even, the only possibility is $u_2=18$, which gives the same
contradiction. Hence, both $u_1, u_2$ are even. If they are
congruent to each other modulo 4, we would obtain $A\subseteq 4\N$ or
$A\subseteq 4N+2$. Hence, their sum is an element in $Q$, which is
$2\pmod{4}$, hence, $u_1+u_2\equiv 18\pmod{36}$. But this implies the
description in part (ii).
\end{proof}

\begin{Lem}
The case $U=\emptyset$ is impossible.
\end{Lem}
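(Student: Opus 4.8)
The plan is to assume $U=\emptyset$ and derive a contradiction by proving that then $|A|\le(\delta_0+o(1))x$. Throughout write $\sigma=\frac{9}{\pi^2}$, so that $U=\emptyset$ means $\delta_a\le 1-\sigma+\epsilon/100$ for every $a\bmod 36$, and set $N(q)=\#\{(n,m)\in A^2:q\mid n+m\}$. The two quantities I would play off against each other are the number of pairs whose sum is non-squarefree ``for a cheap reason'' (divisibility by $4$ or $9$) and the number whose sum avoids squarefreeness only through the square of a prime $\ge 5$.

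The backbone is the pointwise inequality $\mathbf{1}_{\mathrm{NSF}}(t)\le\mathbf{1}[4\mid t]+\mathbf{1}[9\mid t]+\sum_{p\ge5}\mathbf{1}[p^2\mid t]$, valid because a non-squarefree $t$ is divisible by the square of some prime. Since every sum $n+m$ with $n,m\in A$ is non-squarefree, summing this over $A\times A$ gives
\[
|A|^2\le N(4)+N(9)+\sum_{p\ge 5}N(p^2).
\]
Here $N(4)=\alpha_0^2+\alpha_2^2+2\alpha_1\alpha_3$ and $N(9)$ is the analogous bilinear form in the counts $\alpha_i=\#\{n\in A:n\equiv i\ (4)\}$ and $\beta_j=\#\{n\in A:n\equiv j\ (9)\}$. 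Because each class mod $4$ is a union of nine classes mod $36$ and each class mod $9$ a union of four, $U=\emptyset$ forces $\alpha_i\le(1-\sigma)x/4$ and $\beta_j\le(1-\sigma)x/9$ (up to the negligible $\epsilon/100$); feeding these uniform bounds into the bilinear forms bounds $N(4)+N(9)$ by a quantity of the shape $c\,x|A|$ with an explicit constant. This is the step where the hypothesis $U=\emptyset$ is really used: since $\delta_0$ exceeds $\tfrac12(1-\sigma)$, the mass of $A$ cannot sit in a single class mod $4$ or mod $9$, which is the only way $N(4)+N(9)$ could be comparable to $|A|^2$.

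The hard part is the remaining term $\sum_{p\ge5}N(p^2)$, i.e.\ controlling the extent to which $A$ can concentrate modulo the squares of primes $\ge 5$ so as to keep its sumset inside the non-squarefree integers. For density as small as $\delta_0$ this cannot be done with the large sieve, whose error term $x|A|$ already swamps $|A|^2$; instead I would estimate it by hand, using that $A$ has density exceeding $\frac{1}{25}$ (so it cannot sit in one class mod $25$) and, more generally, that simultaneous concentration modulo $p^2$ and $q^2$ costs a factor $1/(p^2q^2)$ in density, so that only finitely many primes contribute appreciably. I expect this estimate to be sharp, producing exactly the threshold constant $\delta_0=\frac14-\frac{2}{\pi^2}$, with the tightness witnessed by the set of part~(iii), whose sumset is almost all non-squarefree integers and which nearly turns the displayed inequality into an equality.

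Combining the two bounds turns the displayed inequality into $|A|\le(\delta_0+o(1))x$, contradicting $|A|>(\delta_0+\epsilon)x$ once $x$ is large. It is worth noting that the degenerate configurations are automatically covered: if almost every pair already has $4\mid n+m$ or $9\mid n+m$ (so that the sieve term is irrelevant), then $N(4)+N(9)$ is close to $|A|^2$, which the bilinear bound of the second paragraph already forbids; equivalently $V+V\subseteq Q$, and a short combinatorial analysis as in Lemma~2 then places $A$ inside $4\N$, $9\N$, $4\N+2$, or the two--class set of (ii), the last of which has density at most $\frac{1-\sigma}{18}<\delta_0$ when $U=\emptyset$.
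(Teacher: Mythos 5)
Your opening inequality $|A|^2\le N(4)+N(9)+\sum_{p\ge5}N(p^2)$ is valid, and the way you extract $N(4)+N(9)\le(1-\frac{9}{\pi^2})(\frac14+\frac19)x|A|\approx0.67\,|A|^2$ from $U=\emptyset$ is fine. The genuine gap is the term $\sum_{p\ge5}N(p^2)$, which you yourself flag as the hard part and then only gesture at. The remaining budget after the mod $4$ and mod $9$ terms is about $0.33\,|A|^2$, but already for $p=5$ the trivial bound is $N(25)\le\frac{x}{25}|A|\approx\frac{|A|^2}{25\delta_0}\approx0.85\,|A|^2$, and your heuristic for improving it does not close the hole: the observation that $\delta_0>\frac1{25}$ only forbids $A$ from lying in a \emph{single} class mod $25$, whereas $A$ could be split evenly between the two classes $j$ and $-j$ mod $25$ (each of density $\delta_0/2<\frac1{25}$), which makes $N(25)$ as large as $\frac12|A|^2$ with no class overloaded; likewise the claim that simultaneous concentration mod $p^2$ and $q^2$ ``costs $1/(p^2q^2)$'' addresses a single class mod $p^2q^2$, not the bilinear form $\sum_j c_jc_{-j}$ that actually appears. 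To exclude such configurations you would have to track how the pairs internal to the class $j$ (whose sums are $\equiv 2j\pmod{25}$, hence not divisible by $25$) are covered by $4$, $9$ and larger primes --- i.e.\ you are forced back into a local case analysis, and the assertion that the resulting optimization lands exactly at $\delta_0$ is unsupported. As written, no contradiction follows.

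For comparison, the paper avoids the global first moment entirely and uses a local two-point correlation. Since $U=\emptyset$, the twelve classes $b$ with $a+b\in Q$ carry total density at most $12(1-\frac{9}{\pi^2})<36\delta_0$, so for each $a\in V$ some class $b$ with $a+b\notin Q$ has $\delta_b\ge0.0269$; this produces two elements $b_1,b_2\equiv b\pmod{36}$ whose difference is divisible by no $p^2$ with $p\ge5$, and a two-term inclusion--exclusion shows that the density of $n\equiv a\pmod{36}$ with \emph{both} $n+b_1$ and $n+b_2$ non-squarefree is at most about $0.0427<36\delta_0/36$. This bounds every $\delta_a$ simultaneously and gives the contradiction. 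If you want to rescue your approach, you need a substitute for this two-point input; the single-divisor union bound by itself is too lossy at density $\delta_0$.
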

\begin{proof}
We have
\[
36\delta_0 - 12\big(1-\frac{9}{\pi^2}\big)\geq 24\cdot 0.0269
\]
hence, for every $a\in V$ there exists some $b$, such that
$\delta_b\geq 0.0269$, and $a+b\not\in Q$. In particular, we can find
elements $b_1, b_2\in A$ with $|b_1-b_2|\leq 40$ and $b_i\equiv
b\pmod{36}$. As $x\rightarrow\infty$, the number of integers $n\equiv
a\pmod{36}$, such that both $n+b_1$ and $n+b_2$ are not squarefree, is
asymptotically equal to
\begin{multline*}
\frac{x}{36}\left(1-2\prod_{p\geq 5}\big(1-\frac{1}{p^2}\big) +
\underset{p^2|b_1-b_2}{\prod_{p\geq 5}}\big(1-\frac{1}{p^2}\big)
\underset{p^2\nmid b_1-b_2}{\prod_{p\geq 5}}\big(1-\frac{2}{p^2}\big)\right)\\
\leq \frac{x}{36}\left(1-\frac{18}{\pi^2}+\frac{24}{25}\prod_{p\geq 7}
\big(1-\frac{2}{p^2}\big)\right) = 0.04266\ldots\cdot\frac{x}{36},
\end{multline*}
hence, for $x$ sufficiently large we obtain $\delta_a\leq 0.04267$ for
all $a$. But this is 
impossible, since $\delta_0=0.0473>0.04267$.
\end{proof}
\begin{Lem}
\label{Lem:38}
Let $a, b\in V$ be classes with $a+b\not\in Q$, and let
$\epsilon>0$ be given. Then for 
$N$ sufficiently large we have $\delta_a+\frac{3}{8}\delta_b\leq
1-\frac{9}{\pi^2}+\epsilon$. 
\end{Lem}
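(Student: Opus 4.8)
The plan is to bound $\delta_a$ from above by a second moment in the variable $\beta\in T$, and then to feed in the density of $T$ through the way $T$ can cluster modulo prime squares. Write $M=x/36$, $S=\{n\in A:n\equiv a\ (36)\}$ and $T=\{n\in A:n\equiv b\ (36)\}$, so $|S|=\delta_aM$ and $|T|=\delta_bM$. Since $a\in V$ there is some $\alpha_0\in S$, and every $\alpha_0+\beta$ with $\beta\in T$ lies in the class $a+b\ (36)\notin Q$ and is not squarefree; as the non-squarefree integers in such a class have density $1-\frac9{\pi^2}$, this already gives the preliminary bound $\delta_b\le1-\frac9{\pi^2}+\epsilon$, which is all I will need about the size of $\delta_b$.

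For the main estimate, for $\alpha\equiv a\ (36)$ set $f(\alpha)=\#\{\beta\in T:\alpha+\beta\text{ is not squarefree}\}$. Because $A+A$ contains no squarefree integer, $f(\alpha)=|T|$ for every $\alpha\in S$, whence $|S|\,|T|^2\le\sum_{\alpha\equiv a,\ \alpha\le x}f(\alpha)^2=\sum_{\beta,\beta'\in T}\#\{\alpha\le x:\alpha\equiv a\ (36),\ \alpha+\beta\text{ and }\alpha+\beta'\text{ both non-squarefree}\}$. By the inclusion–exclusion sieve already used above, the inner cardinality is asymptotically $h(\beta-\beta')M$, where $h(d)=1-\frac{18}{\pi^2}+g(d)$ and $g(d)=\prod_{p\ge5,\,p^2\mid d}(1-\tfrac1{p^2})\prod_{p\ge5,\,p^2\nmid d}(1-\tfrac2{p^2})$. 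Summing and discarding the diagonal, which contributes only $O(|T|M)=o(\delta_b^2M^3)$, I obtain $\delta_a\le1-\frac{18}{\pi^2}+\overline g+o(1)$, where $\overline g$ is the average of $g(\beta-\beta')$ over ordered pairs of distinct elements of $T$.

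It then remains to bound $\overline g$ in terms of $\delta_b$. With $\lambda=\prod_{p\ge5}(1-\frac2{p^2})$ one has $g(d)=\lambda\prod_{p\ge5,\,p^2\mid d}\frac{p^2-1}{p^2-2}$, so that $\overline g=\lambda\,\mathbb E_{\beta\ne\beta'}\prod_{p\ge5}\bigl(1+\frac{\mathbf1[p^2\mid\beta-\beta']}{p^2-2}\bigr)$. The only input about $T$ is how concentrated it can be: as all elements of $T$ lie in one class modulo $36$, for squarefree $m$ prime to $6$ each class modulo $m^2$ contains at most $M/m^2$ of them, so $q_m:=\Pr_{\beta\ne\beta'}[m^2\mid\beta-\beta']\le\min\!\bigl(1,(m^2\delta_b)^{-1}\bigr)\le\prod_{p\mid m}\min\!\bigl(1,(p^2\delta_b)^{-1}\bigr)$. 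Expanding the product over $m$ squarefree gives $\overline g\le\overline g_{\max}(\delta_b):=\lambda\prod_{p\ge5}\bigl(1+\frac{\min(1,(p^2\delta_b)^{-1})}{p^2-2}\bigr)$. Since $1-\frac{18}{\pi^2}+\frac9{\pi^2}=1-\frac9{\pi^2}$, combining the two estimates reduces the lemma to the one-variable inequality $\overline g_{\max}(\delta_b)+\tfrac38\delta_b\le\frac9{\pi^2}$ for $0<\delta_b\le1-\frac9{\pi^2}$.

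The hard part is this last inequality together with checking that the constant $\frac38$ is admissible. Here the identity $\frac9{\pi^2}=\lambda\prod_{p\ge5}\frac{p^2-1}{p^2-2}$ shows that $\frac9{\pi^2}-\overline g_{\max}(\delta_b)=\lambda\bigl(\prod_p(1+\frac1{p^2-2})-\prod_p(1+\frac{\min(1,(p^2\delta_b)^{-1})}{p^2-2})\bigr)$ tends to $0$ as $\delta_b\to0$, so the bound is automatically tight in that limit; for $\delta_b>0$ the gap is positive and governed by the $p=5$ factor $\frac{1-q_5}{23}$ with $q_5=\min(1,(25\delta_b)^{-1})$. I would then verify that $\bigl(\frac9{\pi^2}-\overline g_{\max}(\delta_b)\bigr)/\delta_b$ is smallest at the upper end $\delta_b=1-\frac9{\pi^2}$ of the admissible range, where it still equals $\approx0.68$; as $0.68>\frac38$, the inequality holds with room to spare, and in fact any constant below this threshold would serve. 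The two technical points requiring care are the uniformity of the squarefree-sieve error over the $O(x^2)$ pairs $(\beta,\beta')$—which follows from the power-saving error term for squarefree integers in an arithmetic progression, uniformly in the shift—and the submultiplicativity $q_m\le\prod_{p\mid m}q_p$ used to factor the bound on $\overline g$.
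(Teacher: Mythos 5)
Your argument is correct, but it takes a genuinely different route from the paper. The paper never averages over pairs: it takes the least $k\ge 3$ such that some pair $b_1,b_2\equiv b\pmod{36}$ in $A$ has $p_k^2\nmid b_1-b_2$, deduces $\delta_b\le\prod_{i=3}^{k-1}p_i^{-2}+\epsilon$ because $T$ then lies in a single class modulo $\prod_{i=3}^{k-1}p_i^2$, applies the two-shift sieve to that one extremal pair to get $\delta_a\le 1-\frac{9}{\pi^2}-\frac{9}{\pi^2(p_k^2-1)}+\epsilon$, and disposes of the resulting discrete family of inequalities over $k$ (everything reduces to $k=3$ plus one sub-case). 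Your second-moment version replaces the extremal pair by the average of $g(\beta-\beta')$ over all pairs and turns the concentration of $T$ modulo prime squares into the explicit majorant $\overline g_{\max}(\delta_b)$; the price is that the discrete check becomes the one-variable inequality $\overline g_{\max}(\delta)+\frac38\delta\le\frac{9}{\pi^2}$ on $(0,1-\frac{9}{\pi^2}]$, which you assert rather than verify. I checked it numerically: with $\lambda=\prod_{p\ge5}(1-\frac{2}{p^2})\approx 0.8303$, the ratio $(\frac{9}{\pi^2}-\overline g_{\max}(\delta))/\delta$ is about $0.80$ at $\delta=0.04$, $0.71$ at $\delta=0.08$ and $0.67$ at the endpoint (it is not quite monotone because of the kink of $\min(1,(25\delta)^{-1})$ at $\delta=\frac1{25}$, but its minimum over the range is indeed at the right end), so $\frac38$ is admissible with a wide margin. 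Two points you should make explicit: the regime of very small $\delta_b$, where the diagonal term and the factor $|T|/(|T|-1)$ hidden in your bound on $q_m$ are not negligible, should be dispatched first by the one-shift bound $\delta_a\le 1-\frac{9}{\pi^2}+o(1)$ combined with $\frac38\delta_b\le\epsilon$; and the ``submultiplicativity'' you invoke is just the elementary inequality $\min(1,(m^2\delta)^{-1})\le\prod_{p\mid m}\min(1,(p^2\delta)^{-1})$, valid since the right side equals $\prod_{p:\,p^2\delta\ge1}(p^2\delta)^{-1}$ and $\delta\le1$. In exchange for the numerical verification, your method gives a cleaner quantitative dependence on $\delta_b$ and shows that any constant up to about $0.67$ could replace $\frac38$; the paper's method buys a short, purely discrete case check.
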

\begin{proof}
Since $a, b\in V$ means that $\delta_a, \delta_b$ are positive, we see
that none of $a, b$ is in $U$.

For the proof we will show that a lower bound for $\delta_b$
implies that $A$ contains elements $b_1, b_2\equiv b\pmod{36}$ with
difference not divisible by too many prime squares. This will then
yield an upper bound for $\delta_a$, and comparing the bounds yields
our claim.

Let $k\geq 3$ be the least integer, such that $A$ contains elements
$b_1, b_2$ with $p_k^2\nmid b_1-b_2$. Then $\delta_b\leq
\prod_{i=3}^{k-1} p_i^{-2}+\epsilon$, and $\delta_a$ is at  most
$\epsilon$ plus the density of integers $n\equiv a\pmod{36}$, such
that both $n+b_1$ and $n+b_2$ are not squarefree. The density of the
integers $n$ such that both $n+b_1$ and $n+b_2$ are squarefree equals
\[
\underset{p^2|b_1-b_2}{\prod_{p\geq 5}}\big(1-\frac{1}{p^2}\big)
\underset{p^2\nmid b_1-b_2}{\prod_{p\geq 5}}\big(1-\frac{2}{p^2}\big)
\leq \big(1-\frac{1}{p_k^2-1}\big)\frac{9}{\pi^2}.
\]
The density of integers $n$ such that none of $n+b_1$, $n+b_2$ is
squarefree is therefore at most
\[
1-\frac{18}{\pi^2} + \big(1-\frac{1}{p_k^2-1}\big)\frac{9}{\pi^2} =
1-\frac{9}{\pi^2} -\frac{9}{\pi^2(p_k^2-1)}.
\]
Hence, 
\[
\delta_a+\frac{3}{8}\delta_b \leq 1-\frac{9}{\pi^2} -\frac{9}{\pi^2(p_k^2-1)} +
\frac{3}{8}\prod_{i=3}^{k-1} p_i^{-2}+\epsilon \leq 1-\frac{9}{\pi^2}+\epsilon,
\]
provided that
\[
\frac{9}{\pi^2(p_k^2-1)} \geq \frac{3}{8}\prod_{i=3}^{k-1} p_i^{-2},
\]
which follows for $k\geq 5$ from $p_k<2p_{k-1}$, and for $k=4$ by
direct inspection. Hence, we find that $k=3$, and obtain $\delta_a\leq
1-\frac{9}{\pi^2} - 0.379$. Thus the bound $\delta_a+\frac{3}{8}\delta_b\leq
1-\frac{9}{\pi^2}+\epsilon$ could only fail if $\delta_b>0.101$. But
then there are elements $b_1, b_2\in A$, $b_1, b_2\equiv b\pmod{36}$,
with $|b_1-b_2|<25$, thus, $b_1-b_2$ is not divisible by the square of
any prime different from 2, 3. This implies
\[
\delta_a \leq 1-\frac{18}{\pi^2} + \prod_{p\geq
  5}\big(1-\frac{2}{p^2}\big)+\epsilon \leq 0.0066,
\]
and the relation $\delta_a+\frac{3}{8}\delta_b\leq
1-\frac{9}{\pi^2}+\epsilon$ holds unless $\delta_b>0.217$, which is
impossible since we already know that $b\not\in U$.
\end{proof}
\begin{Lem}
Suppose that $2a\not\in Q$. Then $\delta_a\leq 0.04+\epsilon$.
\end{Lem}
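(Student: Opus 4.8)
The plan is to show that the hypothesis $2a\notin Q$ turns a large value of $\delta_a$ into a very dense arithmetic progression inside $A$, which in turn forces two elements of the class $a\bmod 36$ to lie so close together that the usual squarefree density estimate collapses $\delta_a$ well below $0.04$. I may assume $\delta_a>0$, since otherwise there is nothing to prove, and I argue by contradiction, supposing $\delta_a>0.04+\epsilon$ with $x$ large.

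First I would extract what $2a\notin Q$ gives. Since $Q$ consists exactly of the classes divisible by $4$ or by $9$, the hypothesis says that $2a$ is divisible neither by $4$ nor by $9$. Hence for any $n,b\in A$ with $n\equiv b\equiv a\pmod{36}$, the sum $n+b$ lies in the class $2a\bmod 36$, is divisible by neither $4$ nor $9$, and, as $A+A$ contains no squarefree integer, is not squarefree; therefore $p^2\mid n+b$ for some prime $p\geq 5$. This is the only role of the hypothesis, and it is essential: it guarantees that the squarefree density in the class $2a$ equals $\frac{9}{\pi^2}$ rather than $0$.

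Next comes the combinatorial heart of the argument, which I expect to be the main obstacle. Writing the elements of $A$ in the class $a\bmod 36$ as $a+36m$, there are $\frac{\delta_a x}{36}$ of them with $m$ ranging over an interval of length $\approx\frac{x}{36}$. If every consecutive gap in $m$ were at least $25$, their number would be at most $\frac{1}{25}\cdot\frac{x}{36}+1$, forcing $\delta_a\leq\frac{1}{25}+\frac{36}{x}$, which is $<0.04+\epsilon$ once $x$ is large. Thus some gap is at most $24$, so there exist $b_1,b_2\in A$ with $b_1\equiv b_2\equiv a\pmod{36}$ and $|b_1-b_2|=36g$ for some $1\leq g\leq 24$. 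The point of the threshold $\frac{1}{25}=0.04$ is precisely that, although $a=b$ forces $36\mid b_1-b_2$, the multiplier satisfies $g<25\leq p^2$ for every prime $p\geq 5$; as $36$ is coprime to all such $p$, the difference $b_1-b_2$ is divisible by the square of no prime $p\geq 5$.

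Finally I would bound $\delta_a$ exactly as in Lemma~\ref{Lem:38}. Every $n\in A$ with $n\equiv a\pmod{36}$ makes both $n+b_1$ and $n+b_2$ non-squarefree, so $\delta_a$ is at most $\epsilon$ plus the density of those $n\equiv a\pmod{36}$ for which both $n+b_1$ and $n+b_2$ fail to be squarefree. Since $b_1-b_2$ carries no prime square $\geq 5$, the density that both shifts are simultaneously squarefree equals $\prod_{p\geq 5}(1-2p^{-2})$, so the relevant density is
\[
1-\frac{18}{\pi^2}+\prod_{p\geq 5}\Big(1-\frac{2}{p^2}\Big)\leq 0.0066,
\]
the same numerical value obtained at the end of Lemma~\ref{Lem:38}. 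Hence $\delta_a\leq 0.0066+\epsilon$, contradicting the assumption $\delta_a>0.04+\epsilon$, and we conclude $\delta_a\leq 0.04+\epsilon$. The delicate step is the gap count: one must recognize that $0.04$ is exactly the density threshold producing a pair whose (necessarily $36$-divisible) difference is still too small to absorb any prime square $\geq 5$, after which the density estimate does all the remaining work.
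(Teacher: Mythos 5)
Your proof is correct and follows essentially the same route as the paper: pigeonhole within the class $a \bmod 36$ to produce two elements whose difference is divisible by no prime square $p^2$ with $p \geq 5$, then the two-shift squarefree density estimate from Lemma~\ref{Lem:38} to force $\delta_a \leq 0.0066 + \epsilon$, contradicting $\delta_a > 0.04+\epsilon$. You are in fact slightly more careful than the paper, which writes $|a_1-a_2|<25$ where the difference is really $36g$ with $g<25$; your remark that $\gcd(36,p)=1$ for $p\geq 5$ is the correct justification of the step.
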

\begin{proof}
If $\delta_a>0.04+\epsilon$, then there exist elements $a_1, a_2\in
A$, $a_1, a_2\equiv a\pmod{36}$ with $|a_1-a_2|<25$. Hence, $a_1-a_2$
is not divisible by the square of any prime different from 2 and 3, and
the same argument as in the previous lemma now implies $\delta_a\leq
0.0066$, which proves our claim.
\end{proof}

\begin{Lem}
The case $|U|=1$ is impossible.
\end{Lem}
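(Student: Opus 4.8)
The plan is to assume $|U|=1$, say $U=\{u\}$, and to contradict the mass identity $\sum_{a}\delta_a=36\delta>36(\delta_0+\epsilon)$. Writing $C=1-\frac{9}{\pi^2}$, I would first record the numerology $36\delta_0=9-\frac{72}{\pi^2}=8C+1$, together with $\delta_u\le 1+\epsilon$ and $\delta_a\le C+\epsilon$ for every $a\notin U$. Since $u\in U\subseteq V$, the first lemma gives $2u\in Q$, so $u$ is even or $9\mid u$; it also gives $V\subseteq Q-u=L_4\cup L_9$, where $L_4=\{a:a\equiv -u\pmod 4\}$ has $9$ classes, $L_9=\{a:a\equiv -u\pmod 9\}$ has $4$, and they meet only in the class $-u$.

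If $u$ is odd then $9\mid u$, so $u\in\{9,27\}$. Any $a\in Q-u$ that is odd with $9\nmid a$ has $2a\notin Q$, hence $\delta_a\le 0.04+\epsilon$ by the preceding lemma, and a direct inspection shows that the only classes of $Q-u$ which are even or divisible by $9$ are $0,9,18,27$, one of which is $u$. Bounding $\delta_u$ by $1+\epsilon$, the three remaining such classes by $C+\epsilon$, and the other eight by $0.04+\epsilon$, I get $\sum_a\delta_a\le 1+3C+8\cdot 0.04+O(\epsilon)=1.584\ldots+O(\epsilon)$, which lies below $8C+1=1.704\ldots$ and contradicts the lower bound.

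The substantive case is $u$ even, where $u\in L_4$. The decisive observation is that any two classes of $L_4$ add up to $0\pmod 4$, hence into $Q$, so Lemma~\ref{Lem:38} is vacuous inside $L_4$; all of the density saving must come from pairs between $L_4\setminus\{u\}$ and $L_9\setminus L_4$. Because $u$ is the only class of $L_4$ that is $\equiv u\pmod 9$, every $a\in L_4\setminus\{u\}$ satisfies $a\not\equiv u\pmod 9$; and every $b\in L_9\setminus L_4$ satisfies $b\equiv -u\pmod 9$ and $b\not\equiv -u\pmod 4$. A short check then gives $a+b\not\equiv 0\pmod 4$ and $a+b\not\equiv 0\pmod 9$, so $a+b\notin Q$. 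Applying Lemma~\ref{Lem:38} to each such pair (and the trivial individual bound $\delta_a,\delta_b\le C+\epsilon$ when one density is $0$) yields $\delta_a+\frac{3}{8}\delta_b\le C+\epsilon$ for all $8\times 3$ pairs.

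It then remains to sum this over the $24$ pairs: each $\delta_a$ occurs $3$ times and each $\delta_b$ occurs $8$ times with weight $\frac{3}{8}$, so the left side equals $3\big(\sum_{a\in L_4\setminus\{u\}}\delta_a+\sum_{b\in L_9\setminus L_4}\delta_b\big)$ and the right side is $24(C+\epsilon)$, giving $\sum_{a\in L_4\setminus\{u\}}\delta_a+\sum_{b\in L_9\setminus L_4}\delta_b\le 8(C+\epsilon)$. Adding $\delta_u\le 1+\epsilon$ produces $\sum_a\delta_a\le 8C+1+9\epsilon$, against $\sum_a\delta_a>8C+1+36\epsilon$. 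The main obstacle, and the only place where anything must be checked carefully, is the pair analysis of the even case: one has to see that $L_4$ contributes nothing through Lemma~\ref{Lem:38} and that the weight $\frac{3}{8}$ is exactly what makes the $8\times 3$ double count close with the factor $8$ needed to reach $36\delta_0$.
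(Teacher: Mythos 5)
Your argument is correct and follows essentially the same route as the paper: the same split into $u$ odd (handled by the $2a\notin Q\Rightarrow\delta_a\le 0.04+\epsilon$ lemma, giving $1+3C+0.32<8C+1$) and $u$ even (handled by showing the $8$ classes of $L_4\setminus\{u\}$ and the $3$ classes of $L_9\setminus L_4$ pair off outside $Q$ and invoking Lemma~\ref{Lem:38}). The only cosmetic difference is that you close the even case by double-counting all $24$ pairs, whereas the paper takes $\delta_1=\max_{V_1}\delta_v$, $\delta_2=\max_{V_2}\delta_v$ and bounds $|A|\le\frac{x}{36}(1+8\delta_1+3\delta_2)$; both exploit the same $8{:}3$ ratio against the weight $\frac38$.
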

\begin{proof}
If $U=\{u\}$, then $A$ is concentrated in 12 residue classes modulo 36,
more precisely, we have $V\subseteq Q-u$. On the other hand, we have
\[
\delta_0 = \frac{1}{36}\big(1+8(1-\frac{9}{\pi^2})\big),
\]
and therefore $V\geq 9$. Assume first that $U$ is odd. Then at most 2
elements in $V$ are even, hence, $V$ contains at most four elements
$v$ with $2v\in Q$. Hence, we have $\delta_u\leq 1$, $\delta_a\leq
1-\frac{9}{\pi^2}+\epsilon$ for three more classes $a_1, a_2, a_3\in V$, and
$\delta_a\leq 0.04+\epsilon$ for the remaining 8 classes. Hence,
\[
|A| \leq \frac{x}{36}\big(1+3(1-\frac{9}{\pi^2}) +
8\cdot 0.04+\epsilon\big) \leq 0.0441 x,
\]
which gives a contradiction since $\delta_0=0.0473\ldots$. Every
$v\in V$ satisfies precisely one of the two relations $v+u \equiv
0\pmod{4}$ or  $v+u \equiv 0\pmod{9}$. Let $v_1, v_2\in
V\setminus\{u\}$ be residue classes, such that $v_1$ satisfies the first,
and $v_2$ the second condition. Then $v_1+v_2\equiv 4x+9y+2u\pmod{36}$, where
$x=\frac{v_1-u}{4}\not\equiv 0\pmod{9}$, and
$y=\frac{v_2-u}{9}\not\equiv 0\pmod{4}$. Since $u$ is 
even, the right hand side is not divisible by 4. If it was divisible
by 9, then we obtain the relations $v_1+v_2\equiv u+v_2\equiv
0\pmod{9}$, that is, $v_1\equiv u\pmod{9}$. Together with $v_1+u\equiv
0\pmod{4}$ and the fact that $u$ is even we obtain $v_1\equiv
u\pmod{36}$, a contradiction. Hence, if we set $V_1=\{v\in V:v\neq u,
4|v-u\}$, and $V_2=\{v\in V:v\neq u, 9|v-u\}$, then $V_1+V_2\cap
Q=\emptyset$. Set $\delta_1=\max_{v\in V_1}\delta_v$,
$\delta_2=\max_{v\in V_2}\delta_v$. Then from Lemma~\ref{Lem:38} we
find that $\delta_1+\frac{3}{8}\delta_2\leq
1-\frac{9}{\pi^2}+\epsilon$, hence,
\[
|A|\leq \frac{x}{36}\big(1+8\delta_1+3\delta_2) =
\frac{x}{36}\big(1+8(\delta_1+3\frac{3}{8}\delta_2)\big) \leq
\frac{x}{36}\big(1+8(1-\frac{9}{\pi^2})+\epsilon\big) = 
(\delta_0+\epsilon)x.
\]
Hence, our claim follows.
\end{proof}

\section{Proof of part (iii) and the corollary}

Our proof is constructive. Denote by $p_i$ the $i$-th prime
number. Fix an integer $k\geq 2$, and set $q=\prod_{i=2}^k
p_i^2$. Then we define 
\[
\mathcal{A}(k)=\{n: 4|n, \mu^2((n, q))=0\}\cup \{n: q|n\} =
\mathcal{A}_1\cup\mathcal{A}_2,
\]
say. We claim that  $\mathcal{A}+\mathcal{A}$ does not contain
squarefree integers. In fact, every element of $A_1+A_1$ is divisible
by 4, while every element of $A_2+A_2$ is divisible by $q$, and $q$ is
obviously a square. If $x\in A_1, y\in A_2$, then $(x+y, q) = (x, q)$
is not squarefree, that is, $x+y$ has a divisor, which is not
squarefree, and is therefore again not squarefree.

The density of $A_1$ is
\[
\frac{1}{4}\Bigg(1-\prod_{i=2}^k\big(1-\frac{1}{p_i^2}\big)\Bigg),
\]
which converges for $k\rightarrow\infty$ to
\[
\frac{1}{4}-\frac{1}{3}\zeta(2)^{-1} = \frac{1}{4}-\frac{2}{\pi^2}
= \delta_0.
\]

Next, we show that $A+A+A$ contains all but finitely many
integers. To do so note that since $k\geq 2$ we now that $A_1$
contains all multiples of 36 as well as all multiples of 100. Hence,
$A_1+A_1$ contains all integers divisible by 4 with finitely many
exceptions, the largest of which is $764$. $A_2$ contains all integers
divisible by $q$, and since $q$ is odd, we see that $0, q, 2q, 3q$
are different residues modulo 4, thus, every integer $> 764 + 3q$ can
be written as the sum of a multiple of 36, a multiple of 100, and a
multiple of $q$, thus, $A_1+A_1+A_2$ contains all integers with
finitely many exceptions.

Finally, we have to bound the number of not squarefree integers, which
are not contained in $A+A$. Let $n$ be an integer, divisible by
$p^2$. If $p=2$, and $n\geq 764$, we can write $n=36x+100 y$, thus
$n\in A+A$. If $p=p_i$, $2\leq i\leq k$, and $n>4q$, we can write $n$
as $4p^2 x + q y$, thus $n\in A+A$. Hence, the density of not
squarefree numbers not contained in $A+A$ is at most
$\sum_{i>k}\frac{1}{p_i^2}$, which tends to 0 as $k$ goes to infinity.

We now prove the corollary.
The only thing which is not obvious from the theorem are the claimed
properties of the set constructed in Theorem~1 (ii). We take
$a=1$. Then every integer $n\equiv 30\pmod{36}$ can be written as the
sum of 3 elements of $A$, and this residue class contains infinitely
many squarefree integers. To show that every sufficiently large even
integer is the sum of 6 elements in $A$ it suffices to show that every
sufficient large integer, which is divisible by 4, is the sum of 5
elements of $A$, that is, we have to show that every residue class $n$ 
modulo 9 can be written as $n=2x-y$, where $x, y$ are natural numbers
with $0<2x+y\leq 5$. However, this can be checked with not much effort.

\end{document}